\documentclass[12]{article}
\usepackage{authblk}


\usepackage{amssymb, amsmath, amsthm}
\usepackage{ stmaryrd } 
\usepackage{color, enumerate}
\usepackage{epsfig, graphicx, subcaption, float}
\usepackage{algorithm, algorithmic}

\usepackage{comment}

\usepackage{hyperref} 



\theoremstyle{plain} 
  \newtheorem{theorem}{Theorem}[section]
  \newtheorem{proposition}[theorem]{Proposition}
  \newtheorem{lemma}[theorem]{Lemma}

\theoremstyle{definition} 
  \newtheorem{definition}[theorem]{Definition}
\theoremstyle{remark} 
  \newtheorem{remark}[theorem]{Remark}
  

\providecommand{\keywords}[1]
{
  \small\noindent	
  \textbf{{Keywords:}} #1
}


\newcommand{\bR}{\mathbb{R}}

\newcommand{\bN}{\mathbb{N}}

\newcommand{\mcP}{\mathcal{P}}

\newcommand{\cA}{\mathcal{A}}
\newcommand{\cE}{\mathcal{E}}
\newcommand{\cF}{\mathcal{F}}
\newcommand{\cG}{\mathcal{G}}

\newcommand{\cN}{\mathcal{N}}
\newcommand{\cP}{\mathcal{P}}
\newcommand{\cV}{\mathcal{V}}

\definecolor{blue}{rgb}{0,0,0.9}

\definecolor{red}{rgb}{0.9,0,0}

\title{Network Consensus in the Wasserstein Space of Probability Measures Defined on Multi-Dimensional Euclidean Spaces}


\author[1]{Pilgyu Jung}
\affil[1]{Department of Mathematics\\ \newline
Sungkyunkwan University\\ ({\tt pilgyujung@skku.edu})}
\author[2]{Yoon Mo Jung}
\affil[2]{Department of Mathematics\\ \newline
Sungkyunkwan University\\ ({\tt ymjung@skku.edu})}

%

\begin{document}

\maketitle

\begin{abstract}
The consensus problem—achieving agreement among a network of agents—is a central theme in both theory and applications. Recently, this problem has been extended from Euclidean spaces to the space of probability measures, where the natural notion of averaging is given by the Wasserstein barycenter. While prior work established convergence in one dimension, the case of higher dimensions poses additional challenges due to the curved geometry of Wasserstein space. In this paper, we develop a framework for analyzing such consensus algorithms by employing a Wasserstein version of Jensen’s inequality. This tool provides convexity-type estimates that allow us to prove convergence of nonlinear consensus dynamics in the Wasserstein space of probability measures on $\mathbb{R}^d$.
\end{abstract}

\keywords{Wasserstein space, Wasserstein barycenter,  Consensus algorithms, Time-varying networks}

\section{Introduction}
The \emph{consensus problem}—achieving agreement among a network of agents—plays a crucial role in various applications and serves as a fundamental concept in both theoretical and practical contexts; see, for instance, \cite{berger1981necessary,degroot1974reaching, tsitsiklis1984problems,winkler1968consensus}.  
In particular, \cite{jadbabaie2003coordination,vicsek1995novel} demonstrated that a group of agents following nearest-neighbor interaction rules will eventually align to a common heading, provided the interaction topology becomes connected sufficiently often.
Furthermore, the works \cite{moreau2004stability, olfati2007consensus, olfati2004consensus} established consensus convergence under various conditions, including time-varying communication graphs, communication delays, and switching topologies.

In modern applications, agent states may evolve on a \emph{non-Euclidean space}.  
For example, synchronization problems (closely related to consensus \cite{li2009consensus}) typically aim to steer a network of oscillators toward a common frequency or phase.  
Such problems often involve dynamics on the circle; see \cite{acebron2005kuramoto, dorfler2014synchronization,rodrigues2016kuramoto,strogatz2000kuramoto} for details. Several noteworthy works of consensus protocols in non-Euclidean spaces are presented in \cite{MR3101606,MR3373939,MR3340784,MR2480126,tuna2007consensus}.

 

The Wasserstein barycenter provides a geometry-aware notion of averaging probability measures and has proven useful in diverse applied domains. In the field of computer graphics, it has been successfully utilized to naturally blend multiple textures \cite{rabin2011wasserstein} and to smoothly interpolate between 3D shapes \cite{solomon2015convolutional}. In machine learning, it offers an effective solution for domain adaptation by reducing the distributional discrepancy between training and testing data \cite{courty2016optimal}, and it has played a decisive role in stabilizing the training process of generative models \cite{arjovsky2017wasserstein}. In model aggregation, it has been applied to ensemble predictive distributions \cite{dognin2019wasserstein}, to aggregate subset posteriors in scalable Bayesian inference \cite{srivastava2018wasp}, and to fuse models under transport-based geometry \cite{akash2023fusion}. 

Building on these advances, the consensus problem has recently been extended to the 
\emph{space of probability measures} \cite{bishop2014distributed,bishop2021network,cisneros2022distributed}. 
In the works \cite{bishop2014distributed,bishop2021network}, a nonlinear consensus algorithm was proposed in the Wasserstein space of probability measures on $\mathbb{R}^d$, 
and it was shown that when $d=1$, the agents’ distributions converge to a common probability measure under a suitable connectivity assumption.  
Despite this progress, analyzing consensus dynamics in Wasserstein space presents significant challenges. 
A central obstacle is that the Wasserstein space $\mathcal{P}_2(\mathbb{R}^d)$ is \emph{not flat} for $d>1$; it is a positively curved metric space in the sense of Aleksandrov \cite{MR2401600}. 
This curvature implies that barycenters need not vary convexly along geodesics. Indeed, the map $\mu\to W_2^2(\mu,\nu)$ along geodesics is semi-concave. See, for instance, \cite[Chapter 7]{MR2401600}, which  
thereby complicates the convergence analysis.  
To address these issues, we employ a \emph{Wasserstein Jensen’s inequality}, 
a nonlinear analogue of Jensen’s inequality in Wasserstein space \cite{MR3590527}. 
This inequality provides convexity-type estimates in Wasserstein space and serves as a key tool for analyzing consensus algorithms.

The remainder of the paper is organized as follows. 
In Section~\ref{sec:pre}, we introduce the update rule based on the Wasserstein barycenter and state the main results. 
In Section~\ref{sec:aux}, we present several auxiliary results. 
Section~\ref{sec:proof} provides the proofs of the main results.
Finally, we provide some concluding remarks in Section~\ref{sec:conclusion}.

\section{The Consensus Protocol and Main Results}\label{sec:pre}
We begin by introducing the Wasserstein spaces and the notion of the Wasserstein barycenter.
Let $\cP(\bR^d)$ be the set of all probability measures on $(\bR^d,|\cdot |)$, where $d$ is a positive integer indicating the dimension and $|\cdot |$ is the Euclidean distance.
For $p\in [1,\infty)$,
define $\cP_p(\bR^d)=\{\mu\in\cP(\bR^d):\int_{\bR^d}|x|^p\,d\mu(x)<\infty\}$.
Let $\cP_{p,ac}(\bR^d)$  denote the set of all absolutely continuous measures in $\cP_p(\bR^d)$. 

For any two probability measures $\mu,\nu\in \cP_p(\bR^d)$, the $p$-Wasserstein distance is defined by
\[
W_p(\mu,\nu):=\min_{\gamma\in\Pi(\mu,\nu)}\left(\int_{\bR^{d}\times \bR^d}|x-y|^p\,d\gamma(x,y)\right)^{1/p},
\]
where $\Pi(\mu,\nu)$ denotes the set of transport plans from $\mu$ to $\nu$:
\[
\Pi(\mu,\nu)=\left\{\gamma\in \mcP(\bR^d\times \bR^d):\pi^1_{\#}\gamma=\mu,\pi^2_{\#}\gamma=\nu\right\}.
\]Here, $\pi^1(x,y)=x$ (resp.\ $\pi^2(x,y)=y$), and 
$\pi^1_{\#}\gamma$ (resp.\ $\pi^2_{\#}\gamma$) denotes the pushforward measure.

We define the Wasserstein barycenter as follows.
Let $E\subset \bN$ be a nonempty finite set.
Given probability measures $\{\mu_j\}_{j\in E}\in \cP_2(\bR)$ and weights $\lambda_j>0$ such that $\sum_{j\in E}\lambda_j=1$, we define
\[
\operatorname{bar}(\{(\mu_j,\lambda_j)\}_{j\in E})
:=\underset{\nu\in \cP_2(\bR^d)}{\operatorname{argmin}}\sum_{j\in E}\lambda_j W_2^2(\mu_j,\nu).
\]
If the weights are uniform, we abbreviate the barycenter as \( \operatorname{bar}(\{\mu_j\}_{j \in E}) \).

Let $\bN$ be the set of all positive integers and set $\bN_0=\bN\cup \{0\}$. We define 
\[
\llbracket k \rrbracket=\{1,2,\ldots,k\}\quad \text{for any $k \in \bN $.}
\]
For $i,j \in \bN_0$ with $i\le j$, we define 
\[
\llbracket i,j \rrbracket=[i,j]\cap \bN_0=\{i,i+1,\ldots,j\}.
\]
We now introduce an update rule based on the Wasserstein barycenter.
Consider a group of agents indexed by \(\mathcal{V} = \llbracket n\rrbracket\), and a set of (possibly time-varying) undirected links \(\mathcal{E}(t) \subset \mathcal{V} \times \mathcal{V}\), which form a network graph \(\mathcal{G}(t) = (\mathcal{V}, \mathcal{E}(t))\), where time $t$ is indexed by $\bN_0=\{0,1,2,3,\ldots\}$. Here, $n$ is a positive integer indicating the number of agents. The set of neighbors of agent \(i\) at time \(t\) is denoted as \(\mathcal{N}_i(t) = \{j \in \mathcal{V} : (i, j) \in \mathcal{E}(t)\}\). 

The adjacency matrix of the graph \(\mathbf{A}(t) \in \mathbb{R}^{n \times n}\) is symmetric and given by \(\mathbf{A}(t) = \mathbf{A}(t)^\top = [a_{ij}(t)]\), where \(a_{ij}(t) = 1\) if and only if \((i,j) \in \mathcal{E}(t)\), and \(a_{ij}(t) = 0\) otherwise. We assume that \(a_{ii}(t) = 1\) for all \(i\) and \(t\), implying that \(i \in \mathcal{N}_i(t)\) for all \(t\). 
A weighted adjacency matrix \(\mathbf{W}(t) = [w_{ij}(t)] \in \mathbb{R}^{n \times n}\) is defined such that \(a_{ij}(t) = 1\) if and only if \(w_{ij}(t) > 0\), with \(w_{ij}(t) = 0\) otherwise. Additionally, we require \(\sum_{j \in \mathcal{N}_i(t)} w_{ij}(t) = 1\).
 
Suppose that the measure of agent $i$ at time $t$ is updated by
\begin{equation}
\label{update}    \mu_i(t+1)=\operatorname{bar}(\{(\mu_j(t),w_{ij}(t))\}_{j\in \cN_i(t)}).
\end{equation}
Naturally, for the update rule \eqref{update} to converge to a consensus value, it is necessary that every pair of nodes communicates at least once over time. The following definition formalizes this requirement.

\begin{definition}
  For $j=1,2,\ldots,m$, let $\cG_j=(\cV,\cE_j)$ be a network graph with vertex set $\cV$ and link set $\cE_j$. We say that $\{\cG_1,\cG_2,\ldots, \cG_m\}$ is {\em jointly connected} if the union $\cup_{j=1}^m \cG_j=(\cV, \cup_{j=1}^m \cE_j)$ is connected. 
\end{definition}

Throughout the paper, we assume that there exists a partitioning $0=\tau_0<\tau_1<\cdots<\infty$ with $\sup_{i}|\tau_i-\tau_{i-1}|=L<\infty$ such that the collection ${\cG(\tau_i),\ldots,\cG(\tau_{i+1}-1)}$ is jointly connected for each $i$. We also assume that for $i,j \in \cV$ and $t\in \bN_0$, we have 
\begin{equation}
    \label{eq_lower}
w_{ij}(t)\ge \delta \quad \text{if $j\in \cN_i(t)$}
\end{equation}
for some constant $\delta\in (0,1)$.

\begin{remark} We note that the lower bound assumption on $w_{ij}$ in \eqref{eq_lower} encompasses 
the leaderless coordination model in \cite{jadbabaie2003coordination}. 
In that setting, the weights are given by
\[
    w_{ij}(t) =
    \begin{cases}
        \dfrac{1}{|\mathcal{N}_i(t)|}, & j \in \mathcal{N}_i(t), \\[1.2ex]
        0, & j \notin \mathcal{N}_i(t),
    \end{cases}
    \qquad \forall\, i \in \cV.
\]
\end{remark}

We are now ready to state our main results.

\begin{theorem}
    \label{thm_convergence}
    Let $\mu_1(0),\ldots,\mu_n(0)\in \cP_{2,ac}(\bR^d)$. Suppose that $\mu_i(t)$ follows the update rule \eqref{update}. Then, for all $i,j\in \cV$, we have \[\lim_{t\to \infty}W_2(\mu_i(t),\mu_j(t))=0.\]
    Moreover, there exists $\mu^*\in \cP_{2}(\bR^d)$ such that 
    \begin{equation}
        \label{main converge}
        \lim_{t\to \infty}W_p(\mu_i(t),\mu^*)=0
    \end{equation}
        for all $i \in \cV$ and all $p\in [1,2)$. 
\end{theorem}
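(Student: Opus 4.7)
My plan uses the Wasserstein Jensen's inequality of \cite{MR3590527} as the central tool: for any convex $\phi : \bR^d \to \bR$ and the barycenter $\bar{\mu} = \operatorname{bar}(\{(\mu_j,\lambda_j)\}_{j\in E})$, one has $\int \phi\, d\bar{\mu} \le \sum_j \lambda_j \int \phi\, d\mu_j$, with equality for affine $\phi$. Combined with the existence of optimal transport maps $T_j : \bar{\mu} \to \mu_j$ satisfying the Agueh--Carlier identity $\sum_j \lambda_j T_j = \operatorname{id}$ on $\operatorname{supp}\bar{\mu}$, this upgrades to a Wasserstein analogue of the Euclidean variance decomposition,
\[
    \sum_{j \in E} \lambda_j \int |y - c|^2\, d\mu_j(y) \;=\; \int |x-c|^2\, d\bar{\mu}(x) \;+\; \sum_{j \in E} \lambda_j\, W_2^2(\mu_j, \bar{\mu}), \qquad c \in \bR^d,
\]
which will be the workhorse of the proof. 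Absolute continuity propagates along the update rule (again by Agueh--Carlier), so these tools apply at every time $t$.

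First, applying Jensen equality to coordinate projections, the means $m_i(t) := \int x\, d\mu_i(t)$ satisfy the classical linear recursion $m_i(t+1) = \sum_{j \in \cN_i(t)} w_{ij}(t)\, m_j(t)$; under the joint connectivity hypothesis and the positivity $w_{ij}(t) \ge \delta$, standard Euclidean consensus results \cite{jadbabaie2003coordination,moreau2004stability,tsitsiklis1984problems} yield a common limit $m_i(t) \to m^* \in \bR^d$. Applying Jensen to $\phi(x) = |x|^2$ shows $\max_i \int |x|^2\, d\mu_i(t)$ is non-increasing, whence the trajectory is tight and has uniformly bounded second moments.

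The core step is the Wasserstein consensus. Setting $c = m^*$ in the variance identity and writing $\sigma_i^2(t) := \int |x - m^*|^2\, d\mu_i(t)$, we obtain
\[
    \sigma_i^2(t+1) \;=\; \sum_{j \in \cN_i(t)} w_{ij}(t)\, \sigma_j^2(t) \;-\; \sum_{j \in \cN_i(t)} w_{ij}(t)\, W_2^2(\mu_j(t), \mu_i(t+1)).
\]
Hence $\Sigma(t) := \max_i \sigma_i^2(t)$ is non-increasing and bounded below, so $\sum_t (\Sigma(t) - \Sigma(t+1))$ is finite. Since this telescoping sum dominates the nonnegative excess $\sum_j w_{i^*(t+1)j}(t)\, W_2^2(\mu_j(t), \mu_{i^*(t+1)}(t+1))$ at the maximizer $i^*(t+1)$, and $w_{ij}(t) \ge \delta$ on active links, the neighbor distances of the maximizing agent vanish in a summable sense. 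Chaining this across each jointly connected window of length $L$, in the spirit of the Jadbabaie--Lin--Morse window contraction, propagates the vanishing to every pair and yields $\lim_{t\to\infty} W_2(\mu_i(t), \mu_j(t)) = 0$ for all $i,j\in\cV$.

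Finally, tightness plus $W_2$-consensus makes $\{\mu_i(t)\}_{i,t}$ a $W_2$-Cauchy family along the trajectory, producing a common weak limit $\mu^* \in \cP_2(\bR^d)$; the Cauchy property uses the one-step bound $\delta\, W_2^2(\mu_i(t), \mu_i(t+1)) \le \sum_j w_{ij}(t) W_2^2(\mu_j(t), \mu_i(t))$ obtained by comparing the variational problem defining $\mu_i(t+1)$ with the test candidate $\mu_i(t)$. Upgrading to $W_p$-convergence for $p \in [1,2)$ is a routine consequence of the uniform second-moment bound together with the elementary inequality $|x|^p \le 1 + |x|^2$, which supplies the uniform $p$-th moment integrability needed to promote weak convergence to $W_p$-convergence. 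The main obstacle is the chaining step: the positive curvature of $\cP_2(\bR^d)$ together with the merely row-stochastic (not doubly stochastic) nature of the time-varying weights prevents a direct monotonicity argument for any natural global quadratic potential, so the summable excess, localized at the current maximizing agent, must be propagated across the connectivity windows with care---a delicate nonlinear, curved analogue of the standard Euclidean diameter contraction.
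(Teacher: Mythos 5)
Your framework---the Agueh--Carlier variance identity, the linear recursion for the means, monotonicity of $\max_i\int|x|^2\,d\mu_i(t)$, and the reduction of $W_p$-convergence for $p<2$ to uniform second moments---is sound and close in spirit to the paper's (which uses the $2$-displacement convexity inequality for $V_2(\mu)=\int|x|^2\,d\mu$ rather than the exact identity). However, the two steps you defer as ``delicate'' or ``routine'' are where the proof actually lives, and both have genuine gaps as written. First, the consensus step: your telescoping bound controls the excess $\sum_j w_{i^*j}(t)\,W_2^2(\mu_j(t),\mu_{i^*}(t+1))$ only at the running maximizer $i^*(t+1)$, which changes with $t$. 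Chaining from an arbitrary $i$ to $j$ over a connectivity window requires bounding the one-step distances $W_2(\mu_{l_{s+1}}(s+1),\mu_{l_s}(s))$ along a joining path whose intermediate agents are generally not maximizers, and the telescoping sum says nothing about those. What is needed is that \emph{every} agent's quadratic potential converges to the common limit (so that the excess $\le\delta^{-1}(\Sigma(t)-\sigma_i^2(t+1))$ vanishes for every $i$). The paper proves exactly this in Lemma~\ref{lem_V2converge} by iterating $V_2(\mu_{l_{s+1}}(s+1))\le(1-\delta)V_2^{\max}(s)+\delta V_2(\mu_{l_s}(s))$ along a joining sequence over a window of length $M\le 2L(n-1)$, obtaining $V_2^{\max}(t_{k+1})\le(1-\delta^M)V_2^{\max}(t)+\delta^M V_2(\mu_i(t))$ and squeezing; your summable-excess-at-the-maximizer mechanism does not substitute for this argument.

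Second, the existence and uniqueness of $\mu^*$: the one-step bound $\delta\,W_2^2(\mu_i(t),\mu_i(t+1))\le\sum_j w_{ij}(t)W_2^2(\mu_j(t),\mu_i(t))$ only shows that consecutive displacements tend to zero, which does not make the trajectory $W_2$- or $W_p$-Cauchy (compare the partial sums of the harmonic series). Without Cauchyness you obtain only subsequential narrow limits via tightness, and you have not shown these coincide---knowing the limiting mean $m^*$ and the limiting second moment does not determine the measure. The paper closes this by proving (Lemma~\ref{lem_k-convex}) that $\int V\,d\mu_i(t)$ converges to a common limit for every convex $V$ with $-C\le V\le C(1+|x|^2)$, then applying this to $K|x|^2$ and to $K|x|^2+\varphi$ for $\varphi\in C_c^\infty(\bR^d)$ with $|D^2\varphi|\le K$; subtracting identifies $\lim_t\int\varphi\,d\mu_i(t)$ for all test functions, pins down the unique narrow limit, and Lemma~\ref{lem1211_2} upgrades this to $W_p$-convergence for $p\in[1,2)$. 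You need this (or some other uniqueness mechanism) to legitimately produce a single $\mu^*$. A minor further point: both the variance identity and the displacement-convexity estimates require the iterates to remain in $\cP_{2,ac}(\bR^d)$, so the propagation of absolute continuity under \eqref{update} should be stated explicitly rather than only in passing.
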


If, in addition, the initial measures satisfy stronger integrability conditions (for example, being Gaussian or having compact support), then convergence to $\mu^*$ in $W_2$ is guaranteed.

\begin{theorem}
    \label{thm_convergence2}Let $\mu_1(0),\ldots,\mu_n(0)\in \cP_{q,ac}(\bR^d)$ for some $q\in (2,\infty)$. Then, there exists $\mu^*\in \cP_{2}(\bR^d)$ such that
    \begin{equation}
        \label{main converge2}
        \lim_{t\to \infty}W_2(\mu_i(t),\mu^*)=0,
    \end{equation}
    for all $i\in \cV$.
\end{theorem}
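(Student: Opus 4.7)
The plan is to deduce $W_2$ convergence from the $W_p$ convergence for $p\in[1,2)$ already established in Theorem~\ref{thm_convergence}, using the higher integrability supplied by the assumption $\mu_i(0)\in\cP_{q,ac}(\bR^d)$ with $q>2$. Let $\mu^*\in\cP_2(\bR^d)$ be the common limit delivered by Theorem~\ref{thm_convergence}; in particular $\mu_i(t)\to\mu^*$ narrowly. Since narrow convergence together with convergence of the second moments is equivalent to $W_2$ convergence in $\cP_2(\bR^d)$, the task reduces to showing
\[
\lim_{t\to\infty}\int_{\bR^d}|x|^2\,d\mu_i(t)=\int_{\bR^d}|x|^2\,d\mu^*.
\]

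The key analytic step is a uniform $q$-th moment bound $\sup_{t\in\bN_0,\,i\in\cV}\int_{\bR^d}|x|^q\,d\mu_i(t)<\infty$. I would obtain this from the Wasserstein Jensen's inequality (cf.\ \cite{MR3590527}): since $x\mapsto|x|^q$ is convex on $\bR^d$, for any weighted family $\{(\mu_j,\lambda_j)\}_{j\in E}$ with $\sum_j\lambda_j=1$,
\[
\int_{\bR^d}|x|^q\,d\operatorname{bar}(\{(\mu_j,\lambda_j)\}_{j\in E})\le \sum_{j\in E}\lambda_j\int_{\bR^d}|x|^q\,d\mu_j.
\]
Applying this to \eqref{update} and writing $M_i(t):=\int_{\bR^d}|x|^q\,d\mu_i(t)$ yields $M_i(t+1)\le\sum_{j\in\cN_i(t)}w_{ij}(t)M_j(t)$, so $(M_i(t))_{i\in\cV}$ evolves under a row-stochastic averaging and $\max_i M_i(t)$ is non-increasing in $t$. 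Combined with the hypothesis $\mu_i(0)\in\cP_{q,ac}(\bR^d)$, this yields a uniform bound $\max_{t,i}M_i(t)\le C<\infty$.

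With $C$ in hand, the upgrade to $W_2$ is routine. For any $R>0$, the tail estimate
\[
\int_{\{|x|\ge R\}}|x|^2\,d\mu_i(t)\le R^{-(q-2)}\int_{\bR^d}|x|^q\,d\mu_i(t)\le C\,R^{-(q-2)}
\]
shows that $x\mapsto|x|^2$ is uniformly integrable along $\{\mu_i(t)\}_{t\in\bN_0}$. Combined with the narrow convergence $\mu_i(t)\to\mu^*$, this forces $\int|x|^2\,d\mu_i(t)\to\int|x|^2\,d\mu^*$, which, together with narrow convergence, gives $W_2(\mu_i(t),\mu^*)\to 0$.

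The main obstacle I anticipate is ensuring that the Wasserstein Jensen's inequality can be invoked at every step of the iteration, which in particular requires the iterates to remain absolutely continuous so that the barycenter in \eqref{update} is well-defined and unique. This should follow by a straightforward induction on $t$: the Wasserstein barycenter inherits absolute continuity as soon as at least one of its marginals is absolutely continuous, so the hypothesis $\mu_i(0)\in\cP_{q,ac}(\bR^d)$ propagates to $\mu_i(t)\in\cP_{q,ac}(\bR^d)$ for all $t\in\bN_0$ and $i\in\cV$, and the moment recursion above is justified throughout the dynamics.
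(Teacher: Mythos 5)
Your proposal is correct, and the crucial estimate is the same as in the paper: both arguments hinge on the uniform bound $\sup_{t,i}\int_{\bR^d}|x|^q\,d\mu_i(t)<\infty$, obtained by applying the Wasserstein Jensen inequality \eqref{eq_convex} to the convex function $x\mapsto|x|^q$ along the update rule \eqref{update}. Where you diverge is in how this bound is converted into $W_2$ convergence. The paper invokes Lemma~\ref{lem1211_2} with $p=2<p_1=q$ to extract a $W_2$-convergent subsequence and then reruns, ``line by line,'' the subsequential-limit uniqueness argument from the proof of Theorem~\ref{thm_convergence} (the test-function argument via Lemma~\ref{lem_k-convex}) to promote this to convergence of the full sequence. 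You instead take the limit $\mu^*$ already furnished by Theorem~\ref{thm_convergence}, note that $W_1$ convergence gives narrow convergence, and upgrade the mode of convergence directly: the Chebyshev-type tail bound $\int_{\{|x|\ge R\}}|x|^2\,d\mu_i(t)\le C R^{-(q-2)}$ gives uniformly integrable second moments, so Lemma~\ref{lem1211_1} with $p=2$ yields $W_2(\mu_i(t),\mu^*)\to0$. Your route is somewhat more economical, since it reuses the \emph{statement} of Theorem~\ref{thm_convergence} rather than its proof and avoids repeating the compactness-plus-uniqueness machinery; the paper's route has the minor advantage of being self-contained at the level $p=2$ and of not needing the equivalence between uniform integrability and moment convergence. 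Your closing remark about propagation of absolute continuity (so that the barycenter is well defined and \eqref{eq_convex} applies at every step) is a point the paper leaves implicit, and it is correctly resolved by the fact that the barycenter of a family containing an absolutely continuous measure is itself absolutely continuous.
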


\section{Auxiliary results}\label{sec:aux}
In this section, we present several lemmas that will be used in the proof of the main theorem. We begin by introducing some notions of convergence.

\begin{definition}
    We say that a sequence of measures $\{\mu_k\}$ converges to a measure $\mu$ {\em narrowly} if 
    \[
    \int_{\bR^d}\varphi \,d\mu_k\to \int_{\bR^d}\varphi\, d\mu \quad \text{for any $\varphi\in C_b(\bR^d)$},
    \]
    where $C_b(\bR^d)$ is the set of all bounded continuous functions on $\bR^d$.
    We denote this convergence by $\mu_k\rightharpoonup \mu$.
\end{definition}

Next, we introduce the concept of tightness for a family of probability measures.

\begin{definition}
    Let $\mathcal{A} \subset \cP(\bR^d)$ be a family of probability measures. We say that $\mathcal{A}$ is {\em tight} if, for any $\varepsilon>0$, there exists a compact set $K_\varepsilon\subset \bR^d$ such that $\mu(\bR^d\setminus K_\varepsilon)\le \varepsilon$ for any $\mu\in \mathcal{A}$.
\end{definition}

Tightness of a family of probability measures is characterized by the following compactness criterion, known as Prokhorov’s theorem.

\begin{theorem}[Prokhorov]
\label{thm_Prokhorov}
A family $\mathcal{A}\subset \cP(\bR^d)$ is tight if and only if $\mathcal{A}$ is relatively compact with respect to narrow convergence, i.e., for any sequence $\{\mu_k\}$ in $\mathcal{A}$ there exists a subsequence $\{\mu_{k_j}\}$ and a probability measure $\mu \in \cP(\bR^d)$ such that 
\[
\mu_{k_j}\rightharpoonup \mu.
\]
\end{theorem}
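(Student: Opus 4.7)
The statement is the classical Prokhorov compactness criterion; I would split it into its two implications and treat them separately.

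\emph{Relatively compact implies tight.} I would argue by contradiction. If $\cA$ is not tight, there exists $\varepsilon_0>0$ such that for every compact $K\subset \bR^d$ some $\mu\in \cA$ satisfies $\mu(\bR^d\setminus K)>\varepsilon_0$. Taking $K=\overline{B(0,n)}$ produces a sequence $\{\mu_n\}\subset \cA$ with $\mu_n(\overline{B(0,n)})<1-\varepsilon_0$. By the assumed relative compactness, a subsequence $\mu_{n_k}\rightharpoonup \mu$ for some $\mu\in\cP(\bR^d)$. The Portmanteau inequality $\mu(U)\le \liminf_k \mu_{n_k}(U)$ applied to $U=B(0,R)$, together with $B(0,R)\subset \overline{B(0,n_k)}$ for $n_k\ge R$, gives $\mu(B(0,R))\le 1-\varepsilon_0$ for every $R$. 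Sending $R\to\infty$ contradicts $\mu(\bR^d)=1$.

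\emph{Tight implies relatively compact.} This is the main direction, and my plan is to combine the Riesz--Markov--Kakutani representation theorem with a Cantor diagonal extraction. Given $\{\mu_k\}\subset \cA$, fix a countable $\|\cdot\|_\infty$-dense subset $\{\varphi_n\}$ of $C_0(\bR^d)$ (which is separable). Since $|\int \varphi_n\, d\mu_k|\le \|\varphi_n\|_\infty$, a diagonal argument produces a subsequence $\mu_{k_j}$ along which $\int \varphi_n\, d\mu_{k_j}$ converges for every $n$. Uniform boundedness and density then extend the limit to a positive linear functional $L(\varphi):=\lim_j \int \varphi\, d\mu_{k_j}$ on $C_0(\bR^d)$, and Riesz--Markov--Kakutani furnishes a nonnegative finite Borel measure $\mu$ with $L(\varphi)=\int \varphi\, d\mu$ for all $\varphi\in C_0(\bR^d)$.

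What remains is to show $\mu\in\cP(\bR^d)$ and to upgrade the convergence from $C_0(\bR^d)$ to all of $C_b(\bR^d)$. Both steps use tightness: for $\varepsilon>0$, pick a compact $K_\varepsilon$ with $\mu_k(\bR^d\setminus K_\varepsilon)\le \varepsilon$ for all $k$, and a cutoff $\psi_\varepsilon\in C_c(\bR^d)$, $0\le \psi_\varepsilon\le 1$, $\psi_\varepsilon\equiv 1$ on $K_\varepsilon$. Passing $\int \psi_\varepsilon\, d\mu_{k_j}\ge 1-\varepsilon$ to the limit gives $\mu(\bR^d)\ge 1-\varepsilon$, while $L(\psi)\le 1$ for all $0\le \psi\le 1$ in $C_0(\bR^d)$ forces $\mu(\bR^d)\le 1$; letting $\varepsilon\to 0$ yields $\mu(\bR^d)=1$. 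For arbitrary $\varphi\in C_b(\bR^d)$, I decompose $\varphi=\varphi\psi_\varepsilon+\varphi(1-\psi_\varepsilon)$, observe that $\varphi\psi_\varepsilon\in C_c(\bR^d)\subset C_0(\bR^d)$ so the first piece converges, and bound the tail piece by $\|\varphi\|_\infty\varepsilon$ uniformly using tightness for $\{\mu_{k_j}\}$ together with $\mu(\bR^d\setminus K_\varepsilon)\le \varepsilon$.

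The main obstacle I anticipate is this last extension step: upgrading weak-$*$ convergence against $C_0(\bR^d)$ to narrow convergence against $C_b(\bR^d)$ requires applying tightness simultaneously to the sequence $\{\mu_{k_j}\}$ and to the candidate limit $\mu$ without circularity. The two earlier ingredients (diagonal extraction and Riesz--Markov representation) are essentially bookkeeping once this obstacle is handled.
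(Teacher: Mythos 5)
Your proof is correct in substance; note, however, that the paper does not prove this statement at all---it simply cites \cite[Theorem 2.1.11]{MR4655923}---so there is no ``paper approach'' to compare against beyond that reference. What you have written is the standard textbook argument for the Euclidean case: the contradiction-plus-Portmanteau argument for ``relatively compact $\Rightarrow$ tight'' (which works cleanly on $\bR^d$ because one can exhaust the space by closed balls), and the diagonal extraction over a countable dense subset of $C_0(\bR^d)$ combined with Riesz--Markov--Kakutani and a cutoff argument for the converse. This is essentially the proof in the cited source, so you are not taking a genuinely different route; you are supplying the proof the paper outsources. One small point to tighten: in the last step you invoke $\mu(\bR^d\setminus K_\varepsilon)\le\varepsilon$ for the limit measure, but what the limit $\int\psi_\varepsilon\,d\mu\ge 1-\varepsilon$ actually gives you is $\mu(\bR^d\setminus \operatorname{supp}\psi_\varepsilon)\le\varepsilon$, i.e.\ smallness of the tail outside the (slightly larger) compact support of the cutoff rather than outside $K_\varepsilon$ itself; the decomposition $\varphi=\varphi\psi_\varepsilon+\varphi(1-\psi_\varepsilon)$ goes through verbatim with that compact set, so this is a cosmetic fix, not a gap. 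The anticipated ``circularity'' in upgrading from $C_0$ to $C_b$ does not materialize for exactly this reason: tightness is a hypothesis on the family, and the tail control on $\mu$ is inherited from the already-established convergence against the cutoffs.
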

\begin{proof}
    See, for instance, \cite[Theorem 2.1.11]{MR4655923}.
\end{proof}

We now introduce a condition that strengthens tightness by controlling the moments uniformly across the family.

\begin{definition}
    We say that $\cA\subset \cP(\bR^d)$ has {\em uniformly integrable $p$-moments} if 
    \[
    \lim_{R\to \infty} \int_{\bR^d\setminus B_R(0)}|x|^p\,d\mu=0 \quad \text{uniformly w.r.t. $\mu \in \cA$}.
    \]
    Here, $B_R(0)=\{x\in \bR^d:|x|<R\}$.
\end{definition}

The following lemma establishes the relationship between convergence in Wasserstein space and narrow convergence.

\begin{lemma}
\label{lem1211_1}
    Let $\{\mu_k\}\subset \cP_p(\bR^d)$. Then 
    \begin{equation*}
        \lim_{k\to \infty}W_p(\mu_k,\mu)=0\Longleftrightarrow\left\{
\begin{aligned}
    &\text{$\{\mu_k\}$ converges to $\mu$ narrowly}, and \\
    &\text{$\{\mu_k\}$ has uniformly integrable $p$-moments.}
\end{aligned}
\right.         
    \end{equation*}
    \begin{proof}
        See, for instance, \cite[Proposition 7.1.5]{MR2401600}.
    \end{proof}
\end{lemma}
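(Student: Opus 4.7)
The plan is to establish the two implications separately. For the forward direction I would exploit a Kantorovich--Rubinstein type bound on bounded Lipschitz test functions, while for the backward direction the natural tool is Skorokhod's representation theorem, which converts narrow convergence into almost sure convergence of suitably coupled random variables.

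For the forward implication, assume $W_p(\mu_k,\mu)\to 0$. Narrow convergence follows by testing against bounded Lipschitz functions: for such $\varphi$ with Lipschitz constant $L$ and any optimal coupling $\gamma_k\in \Pi(\mu_k,\mu)$,
\[
\Bigl|\int \varphi\,d\mu_k-\int\varphi\,d\mu\Bigr|\le L\int|x-y|\,d\gamma_k(x,y)\le L\, W_1(\mu_k,\mu)\le L\,W_p(\mu_k,\mu),
\]
the last step being Jensen's inequality. Since bounded Lipschitz functions form a convergence-determining class for narrow convergence, this yields $\mu_k\rightharpoonup \mu$. For uniform integrability of $p$-moments, I would first establish continuity of the $p$-th moment along $W_p$-convergent sequences via the elementary bound $\bigl||x|^p-|y|^p\bigr|\le p\,(|x|^{p-1}+|y|^{p-1})|x-y|$ applied to the optimal coupling together with H\"older's inequality. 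Narrow convergence combined with $\int|x|^p\,d\mu_k\to \int|x|^p\,d\mu$ then forces uniform integrability of $|x|^p$ over $\{\mu_k\}$: the tail integrals $\int_{|x|>R}|x|^p\,d\mu_k$ differ from $\int_{|x|>R}|x|^p\,d\mu$ only by contributions which can be made small uniformly in $k$ by choosing $R$ large.

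For the backward implication, assume $\mu_k\rightharpoonup\mu$ narrowly and $\{\mu_k\}$ has uniformly integrable $p$-moments. Since $\bR^d$ is Polish, Skorokhod's representation theorem provides random variables $X_k, X$ on a common probability space, with laws $\mu_k$ and $\mu$, such that $X_k\to X$ almost surely. Fatou's lemma and the uniform $p$-integrability of $\{\mu_k\}$ show $\int|x|^p\,d\mu<\infty$, hence $|X|^p$ is integrable. The bound $|X_k-X|^p\le 2^{p-1}(|X_k|^p+|X|^p)$ then implies that the family $\{|X_k-X|^p\}_k$ is uniformly integrable, and together with the almost sure convergence $|X_k-X|^p\to 0$, Vitali's convergence theorem yields $\bE|X_k-X|^p\to 0$. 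Since the joint law of $(X_k,X)$ is an admissible transport plan in $\Pi(\mu_k,\mu)$,
\[
W_p^p(\mu_k,\mu)\le \bE|X_k-X|^p\to 0.
\]

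The hard part will be the backward direction: narrow convergence by itself supplies no canonical coupling between $\mu_k$ and $\mu$ whose transport cost one can estimate. Skorokhod's representation circumvents this obstacle by realizing all the measures simultaneously on a single probability space, at which point the uniform $p$-integrability hypothesis upgrades almost sure convergence of $X_k-X$ to convergence in $L^p$, which is exactly what is needed to control $W_p$.
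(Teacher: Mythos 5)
Your argument is correct in substance, but note that the paper does not actually prove this lemma: it simply cites \cite[Proposition 7.1.5]{MR2401600}, so any genuine proof is ``a different route'' by default. What you have reconstructed is essentially the standard textbook argument (the same one underlying the cited proposition): Lipschitz test functions plus convergence of $p$-th moments for the forward direction, and Skorokhod representation plus Vitali for the backward direction, which is indeed the crux since narrow convergence alone provides no coupling to estimate. Two small points to tidy up. First, in your opening chain of inequalities the middle step $\int|x-y|\,d\gamma_k\le W_1(\mu_k,\mu)$ is backwards if $\gamma_k$ is the $W_p$-optimal plan (the $W_1$ infimum sits \emph{below} the cost of any particular coupling); either take $\gamma_k$ to be $W_1$-optimal, or skip $W_1$ entirely and bound $\int|x-y|\,d\gamma_k\le\bigl(\int|x-y|^p\,d\gamma_k\bigr)^{1/p}=W_p(\mu_k,\mu)$ directly by Jensen. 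The endpoint inequality is correct either way. Second, the sentence deducing uniform integrability of the $p$-moments from narrow convergence together with $\int|x|^p\,d\mu_k\to\int|x|^p\,d\mu$ is stated rather than argued; the standard mechanism is to insert a continuous cutoff $\eta_R$ equal to $1$ on $B_R(0)$ and supported in $B_{2R}(0)$, so that
\[
\limsup_{k\to\infty}\int_{\bR^d\setminus B_{2R}(0)}|x|^p\,d\mu_k
\le \int_{\bR^d}|x|^p\,d\mu-\int_{\bR^d}|x|^p\eta_R\,d\mu
\le \int_{\bR^d\setminus B_R(0)}|x|^p\,d\mu,
\]
which tends to $0$ as $R\to\infty$, and then handle finitely many initial indices separately. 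With those repairs the proof is complete and self-contained, which is arguably more than the paper offers.
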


The following lemma suggests a criterion for the convergence in the Wasserstein space.

\begin{lemma}
    \label{lem1211_2}
    If 
    $
    1\le p<p_1 $ and 
    \[\sup_{\mu\in\cA}\int_{\bR^d}|x|^{p_1}\,d\mu<\infty,\] then for any sequence $\{\mu_k\} \subset \cA,$ there exists a subsequence $\{\mu_{k_j}\}$ such that 
    \[
    \lim_{j\to \infty}W_p(\mu_{k_j},\mu)=0 \quad \text{for some $\mu\in \cP_{p_1}(\bR^d)$}.
    \]
\end{lemma}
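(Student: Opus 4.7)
The plan is to reduce the claim to the combination of Prokhorov's theorem (Theorem~\ref{thm_Prokhorov}) and the characterization of $W_p$-convergence in Lemma~\ref{lem1211_1}, using the uniform $p_1$-moment bound as the sole quantitative input. Write $M := \sup_{\mu \in \mathcal{A}} \int_{\mathbb{R}^d} |x|^{p_1}\, d\mu < \infty$.

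First, I would establish tightness of $\mathcal{A}$ by a Markov-type estimate. For any $\mu \in \mathcal{A}$ and $R > 0$,
\[
\mu(\mathbb{R}^d \setminus B_R(0)) \;\le\; \frac{1}{R^{p_1}} \int_{\mathbb{R}^d \setminus B_R(0)} |x|^{p_1}\, d\mu \;\le\; \frac{M}{R^{p_1}},
\]
which tends to $0$ uniformly in $\mu$ as $R \to \infty$. Thus $\mathcal{A}$ is tight, and Prokhorov's theorem yields a subsequence $\{\mu_{k_j}\}$ and some $\mu \in \mathcal{P}(\mathbb{R}^d)$ with $\mu_{k_j} \rightharpoonup \mu$.

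Next, I would verify that $\mu \in \mathcal{P}_{p_1}(\mathbb{R}^d)$ and that the subsequence has uniformly integrable $p$-moments. For the former, the functional $\nu \mapsto \int |x|^{p_1}\, d\nu$ is lower semicontinuous under narrow convergence (approximate $|x|^{p_1}$ from below by the bounded continuous truncations $\min(|x|^{p_1}, N)$ and apply monotone convergence together with the definition of narrow convergence), so
\[
\int_{\mathbb{R}^d} |x|^{p_1}\, d\mu \;\le\; \liminf_{j \to \infty} \int_{\mathbb{R}^d} |x|^{p_1}\, d\mu_{k_j} \;\le\; M.
\]
For the uniform integrability of $p$-moments, I would exploit the gap $p < p_1$: for any $R > 0$,
\[
\int_{\mathbb{R}^d \setminus B_R(0)} |x|^p\, d\mu_{k_j} \;\le\; R^{p - p_1} \int_{\mathbb{R}^d \setminus B_R(0)} |x|^{p_1}\, d\mu_{k_j} \;\le\; M R^{p - p_1},
\]
which is a uniform bound in $j$ that vanishes as $R \to \infty$ since $p - p_1 < 0$.

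Finally, combining narrow convergence $\mu_{k_j} \rightharpoonup \mu$ with the uniformly integrable $p$-moments just established, Lemma~\ref{lem1211_1} yields $W_p(\mu_{k_j}, \mu) \to 0$, completing the argument. There is no real obstacle here: the entire proof rests on the single observation that a uniform bound at the higher exponent $p_1$ simultaneously produces tightness (giving a narrow limit via Prokhorov) and, because $p < p_1$, controls tail integrals at the lower exponent $p$ (giving uniform $p$-integrability). The only point requiring a bit of care is the lower semicontinuity of the $p_1$-moment under narrow convergence, which is needed to place $\mu$ in $\mathcal{P}_{p_1}(\mathbb{R}^d)$ as claimed.
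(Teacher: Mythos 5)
Your proposal is correct and follows essentially the same route as the paper's proof: Markov's inequality for tightness, Prokhorov for a narrow limit, a tail estimate of the form $\int_{\mathbb{R}^d\setminus B_R(0)}|x|^p\,d\mu\le MR^{p-p_1}$ for uniform integrability of $p$-moments, Lemma~\ref{lem1211_1} to conclude, and a truncation argument to place the limit in $\mathcal{P}_{p_1}(\mathbb{R}^d)$. The only cosmetic difference is that you obtain the tail bound from the pointwise inequality $|x|^p\le R^{p-p_1}|x|^{p_1}$ on $\{|x|\ge R\}$, whereas the paper combines H\"older's inequality with the Markov bound, yielding the identical estimate.
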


\begin{proof}
    Let 
    \[    M=\sup_{\mu\in\cA}\int_{\bR^d}|x|^{p_1}\,d\mu.
    \]
    Using H\"older's inequality, we see that 
    for any $\mu\in \cA$, 
    \[
    \int_{\bR^d\setminus B_R(0)}|x|^p\,d\mu
    \le \mu(\bR^d\setminus B_R(0))^{(p_1-p)/p_1}
    \left(\int_{\bR^d\setminus B_R(0)}|x|^{p_1}\,d\mu\right)^{p/p_1}
    \]
    \[
    \le \mu(\bR^d\setminus B_R(0))^{(p_1-p)/p_1}M^{p/p_1}.
    \]
    Note that 
    \begin{equation}
        \label{eq1211_1}
    \mu(\bR^d\setminus B_R(0))\le R^{-p_1}\int_{\bR^d\setminus B_R(0)} |x|^{p_1}\,d\mu\le R^{-p_1}M.    
    \end{equation}   
    Hence, we obtain
    \begin{equation}
        \label{eq1211_2}
    \int_{\bR^d\setminus B_R(0)}|x|^p\,d\mu
    \le
    R^{p-p_1}M.
    \end{equation}
    
    Let $\{\mu_k\}$ be a sequence in $\cA$.
    Using the estimate \eqref{eq1211_1}, we see that $\cA$ is tight. By Theorem \ref{thm_Prokhorov}, there exists a narrowly convergent subsequence $\{\mu_{k_j}\}$ converging to $\mu^*$. Furthermore, by employing the inequality \eqref{eq1211_2}, it follows that $\{\mu_{k_j}\}$ has uniformly integrable $p$-moments. From Lemma \ref{lem1211_1}, we achieve that 
    \[
    \lim_{j\to \infty}W_p(\mu_{k_j},\mu^*)=0.
    \]

    To finish the proof, we check $\mu^*\in \cP_{p_1}(\bR^d)$. Note that for all $N>0$, since $\mu_{k_j}$ converges to $\mu^*$ narrowly, we have 
    \[
    \lim_{j\to\infty}\int_{\bR^d}(|x|^{p_1}\wedge N)\,d\mu_{k_j}=\int_{\bR^d}(|x|^{p_1}\wedge N)\,d\mu^*.
    \]
    By using the monotone convergence theorem and the above observation, we obtain that 
    \begin{align*}        
    \int_{\bR^d} |x|^{p_1}\,d\mu^*
    &=\limsup_{N\to \infty}\int_{\bR^d}\left(|x|^{p_1}\wedge N\right)\,d\mu^*
    \\&=\limsup_{N\to \infty}\left(\lim_{j\to \infty}\int_{\bR^d}\left(|x|^{p_1}\wedge N\right)\,d\mu_{k_j}\right)
    \\&\le \limsup_{N\to \infty} M=M<\infty.
    \end{align*}
    Hence, the lemma is proved.
    \end{proof}

Here, we introduce a notion of convexity in the Wasserstein space and a key estimate.

\begin{definition}
    For $\mu_0,\mu_1\in \cP_{2,ac}(\bR^d)$ and $t\in [0,1]$, we call the measure $\mu_t=[(1-t)\operatorname{Id}+tT]_{\#}\mu_0$, $t\in [0,1]$ {\em the displacement interpolant} between $\mu_0$ and $\mu_1$. Here, $T$ is the unique optimal transport map satisfying $T_{\#}\mu_0=\mu_1$.
\end{definition}

\begin{definition}
    A functional $\cF:\operatorname{Dom}(\cF)\subset \cP_{2,ac}(\bR^d)\to \bR\cup \{+\infty\}$ is said to be {\em $k$-displacement convex} for $k \in \bR$, if for each displacement interpolation $\mu_t$ between endpoint measures $\mu_0$ and $\mu_1$, we have 
    \[
    \cF(\mu_t)\le (1-t)\cF(\mu_0)+t\cF(\mu_1)-\frac{k}{2}t(1-t)W_2^2(\mu_0,\mu_1).
    \]
\end{definition}

We recall that $V:\bR^d \to \bR$ is a $k$-convex function for $k\in \bR$ if 
\[
V((1-t)x+ty)\le (1-t)V(x)+tV(y)-\frac{k}{2}t(1-t) |x-y|^2,
\]
for all $x,y\in \bR^d$ and $t\in [0,1]$.
The following proposition shows that if a function \( V:\mathbb{R}^d \to \mathbb{R} \) is \( k \)-convex in the classical sense, then the associated functional \( V(\mu) = \int_{\mathbb{R}^d} V(x)\, d\mu(x) \) is \( k \)-displacement convex on \( \mathcal{P}_{2,ac}(\mathbb{R}^d) \).

\begin{proposition}
    Let $V:\bR^d\to \bR$ be a $k$-convex function on $\bR^d$. Then, the functional 
    \[
    V(\mu)=\int_{\bR^d}V(x)\,d\mu(x)
    \]
    on $\cP_{2,ac}(\bR^d)$ satisfies the following: For any positive integer $N\ge2$,  probability measures $\mu_1,\ldots,\mu_N$ in $\cP_{2,ac}(\bR^d)$ and weights $\lambda_1,\ldots, \lambda_N > 0$ with $\sum_{i=1}^N \lambda_i = 1$, we have 
    \begin{equation}
        \label{eq_dis_convex}
    V(\bar{\mu})\le \sum_{i=1}^N\lambda_iV(\mu_i)-\frac{k}{2}\sum_{i=1}^N \lambda_iW_2^2(\bar{\mu},\mu_i),
    \end{equation}
    where $\bar{\mu}=\operatorname{bar}(\{(\mu_i,\lambda_i)\}_{i\in \llbracket N \rrbracket})$. In particular, if $k \ge 0$, then
    \begin{equation}
    \label{eq_convex}
         V(\bar{\mu})\le \sum_{i=1}^N\lambda_iV(\mu_i).
    \end{equation}
\end{proposition}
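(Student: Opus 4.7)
The plan is to reduce the Wasserstein inequality to the pointwise $k$-convexity of $V$ via the multi-marginal coupling induced by the optimal transport maps from the barycenter $\bar\mu$ to each $\mu_i$. The two essential inputs are: (a) the standard reformulation that $V$ is $k$-convex if and only if $x\mapsto V(x)-\tfrac{k}{2}|x|^2$ is convex in the classical sense, and (b) the first-order optimality condition for the Brenier maps $T_i:\bar\mu\to\mu_i$, namely $\sum_{i=1}^N \lambda_i T_i(y)=y$ for $\bar\mu$-a.e.\ $y$.

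First, applying classical Jensen's inequality to the convex function $x\mapsto V(x)-\tfrac{k}{2}|x|^2$ gives, for any $x_1,\ldots,x_N\in\bR^d$ with $\bar x := \sum_i \lambda_i x_i$, the estimate $V(\bar x)-\tfrac{k}{2}|\bar x|^2 \le \sum_i \lambda_i\bigl(V(x_i)-\tfrac{k}{2}|x_i|^2\bigr)$. Combined with the variance identity $\sum_i \lambda_i|x_i|^2 - |\bar x|^2 = \sum_i \lambda_i|x_i-\bar x|^2$, this rearranges into the pointwise bound
\begin{equation*}
V(\bar x) \le \sum_{i=1}^N \lambda_i V(x_i) - \frac{k}{2}\sum_{i=1}^N \lambda_i |x_i-\bar x|^2.
\end{equation*}

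Next, since each $\mu_i\in\cP_{2,ac}(\bR^d)$, standard barycenter theory guarantees that $\bar\mu$ exists and belongs to $\cP_{2,ac}(\bR^d)$, and Brenier's theorem supplies optimal transport maps $T_i$ with $T_{i\#}\bar\mu=\mu_i$ and $W_2^2(\bar\mu,\mu_i)=\int |T_i(y)-y|^2\,d\bar\mu(y)$. Differentiating the functional $\nu\mapsto\sum_i\lambda_i W_2^2(\mu_i,\nu)$ at its minimizer $\bar\mu$ yields the Euler--Lagrange identity $\sum_i\lambda_i T_i(y)=y$ for $\bar\mu$-a.e.\ $y$. Applying the pointwise inequality at $x_i=T_i(y)$, so that $\bar x = y$, and integrating against $\bar\mu$, the change of variables $T_{i\#}\bar\mu=\mu_i$ converts $\int V(T_i(y))\,d\bar\mu(y)$ into $\int V\,d\mu_i$, while $\int |T_i(y)-y|^2\,d\bar\mu(y)$ equals $W_2^2(\bar\mu,\mu_i)$. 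Assembling these yields \eqref{eq_dis_convex}, and \eqref{eq_convex} follows by discarding the nonnegative correction when $k\ge 0$.

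The main obstacle is justifying the Euler--Lagrange identity $\sum_i \lambda_i T_i=\operatorname{Id}$ on $\operatorname{supp}(\bar\mu)$. This rests on the differentiability of $\nu\mapsto W_2^2(\mu_i,\nu)$ at an absolutely continuous minimizer, which in turn requires Brenier's theorem and the absolute continuity of $\bar\mu$ itself. Both facts are by now standard in the theory of Wasserstein barycenters, but they are the non-trivial ingredient linking the pointwise (classical) Jensen inequality to its Wasserstein counterpart.
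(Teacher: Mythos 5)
Your argument is correct. Note, however, that the paper does not prove this proposition at all: it simply cites \cite[Theorem 7.11]{MR3590527} (Kim--Pass), and what you have written is essentially the Euclidean specialization of the proof given there (and, before that, in Agueh--Carlier's work on barycenters in $\bR^d$): reduce $k$-convexity to convexity of $V-\tfrac{k}{2}|\cdot|^2$, use the variance identity to get the pointwise inequality
\[
V(\bar x)\le \sum_i\lambda_i V(x_i)-\frac{k}{2}\sum_i\lambda_i|x_i-\bar x|^2,
\]
evaluate it at $x_i=T_i(y)$ using the barycenter fixed-point identity $\sum_i\lambda_i T_i=\operatorname{Id}$ $\bar\mu$-a.e., and integrate. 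You correctly isolate the two genuinely nontrivial inputs --- absolute continuity of $\bar\mu$ (which holds here since every $\mu_i$ is absolutely continuous) and the Euler--Lagrange identity for the Brenier maps. The only point left implicit is integrability: since $V$ is $k$-convex it is bounded below by a quadratic, so its negative part is $\mu$-integrable for $\mu\in\cP_2(\bR^d)$ and the inequality holds with the convention that $V(\mu)=+\infty$ is allowed, consistent with the paper's definition of the functional's codomain.
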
 
\begin{proof}
    See \cite[Theorem 7.11]{MR3590527}.
\end{proof}
\begin{remark}
\label{rmk0114_1}

The functional
\[
V_2(\mu):= \int_{\bR^d}|x|^2\,d\mu(x)
\]
is a $2$-displacement convex functional. 
\end{remark}

\section{Proof of main results}\label{sec:proof}
Before proving the theorem, we present several lemmas that will be instrumental in its proof.
For $i,j\in \cV$ and nonnegative integers $m_1< m_2$, we say that $i$ meets $j$ on $\llbracket m_1,m_2\rrbracket$ if there exists a sequence $\{l_s\}_{s=m_1}^{m_2}$ with $l_{m_1}=i$ and $l_{m_2}=j$ such that 
\[
l_s\in \cN_{l_{s+1}}(s) \quad \text{for all $s=m_1,m_1+1,\ldots,m_2-1$}.
\]
We refer to such a sequence \( \{l_s\}_{s=m_1}^{m_2} \) as a \textit{joining sequence} from \( i \) to \( j \) on \( \llbracket m_1,m_2\rrbracket \). 

\begin{remark}
\label{rmk0114_2}
   Since we assume $i \in \cN_i(t)$ for all $t \in \bN_0$, it follows that every agent $i$ meets itself on $\llbracket m_1,m_2\rrbracket$ for all nonnegative integers $m_1<m_2$. In addition, if $i$ meets $j$ on $\llbracket m_1,m_2\rrbracket$ and $j$ meets $k$ on $\llbracket m_2,m_3\rrbracket$ ($m_2<m_3$), then $i$ meets $k$ on $\llbracket m_1,m_3\rrbracket$. Consequently, if $i$ meets $j$ on $\llbracket m_1,m_2\rrbracket$, then for any nonnegative integer $s_1\le m_1$ and $s_2\ge m_2$, $i$ meets $j$ on $\llbracket s_1,s_2\rrbracket$.
\end{remark}

The following lemma ensures that any pair of agents becomes connected over each time interval \( \llbracket t_k, t_{k+1}\rrbracket \), where $t_k:=\tau_{k(n-1)}$.
Here, \( \tau_k \) is the time index introduced in Section \ref{sec:pre}. This result will play a key role in establishing the forthcoming estimates.
\begin{lemma}
    \label{lem_meet}
    Let $t_k=\tau_{k(n-1)}$ for $k\in \bN_0$. For any $i,j\in \cV$ and $k\in \bN_0$, $i$ meets $j$ on $\llbracket t_k,t_{k+1}\rrbracket$. 
\end{lemma}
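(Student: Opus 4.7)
The plan is to exploit the fact that $\llbracket t_k, t_{k+1}\rrbracket$ contains exactly $n-1$ consecutive jointly connected blocks and to show that each such block enlarges the set of agents reachable from $i$ by at least one, so that after $n-1$ blocks all $n$ agents have been reached.

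Formally, fix $i\in\cV$ and $k\in\bN_0$, and for each $s\ge t_k$ I would define
\[
F(s):=\{j\in\cV : i \text{ meets } j \text{ on } \llbracket t_k, s\rrbracket\},
\]
with the convention $F(t_k)=\{i\}$, consistent with Remark \ref{rmk0114_2}. The first step is monotonicity: $F(s)\subseteq F(s+1)$. Indeed, given a joining sequence $i=l_{t_k},\ldots,l_s=j$, I can append $l_{s+1}=j$, which is legal because $j\in\cN_j(s)$ by the self-loop assumption $a_{jj}(s)=1$.

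The key step is the per-block growth estimate. Suppose $F(\tau_m)\neq\cV$ for some $m$ in the partition indexing. Joint connectivity of $\{\cG(\tau_m),\ldots,\cG(\tau_{m+1}-1)\}$ produces at least one edge in the union graph between $F(\tau_m)$ and its complement; this edge is active at some particular time $s\in\llbracket\tau_m,\tau_{m+1}-1\rrbracket$ between endpoints $a\in F(\tau_m)$ and $b\notin F(\tau_m)$. Since $a\in F(s)$ by monotonicity and the graph is undirected (so $a\in\cN_b(s)$), I can extend any joining sequence from $i$ to $a$ on $\llbracket t_k,s\rrbracket$ by setting $l_{s+1}=b$, which gives $b\in F(s+1)\subseteq F(\tau_{m+1})$. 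Hence $|F(\tau_{m+1})|\ge|F(\tau_m)|+1$ whenever $F(\tau_m)\neq\cV$. Iterating this for $l=0,1,\ldots,n-1$ over the blocks with $m=k(n-1)+l$ yields
\[
|F(\tau_{k(n-1)+l})|\ \ge\ \min(l+1,n),
\]
and at $l=n-1$ this forces $F(t_{k+1})=\cV$, which is precisely the claim of the lemma.

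The argument is a standard reachability-through-jointly-connected-blocks computation, so I do not expect a substantive analytic obstacle. The main thing to watch is the bookkeeping of indices: correctly aligning the partition markers $\tau_{k(n-1)+l}$ with the joint-connectivity hypothesis, handling the boundary time $s=t_k$ cleanly so that $F(t_k)$ is well-defined even though the definition of "meets" formally requires $m_1<m_2$, and disposing of the trivial case $n=1$ separately.
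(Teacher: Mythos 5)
Your proposal is correct and follows essentially the same route as the paper: both define the set of agents reachable from $i$ (your $F(s)$, the paper's $\cV_s$), use the self-loop assumption for monotonicity, extract from joint connectivity an edge across the cut between reached and unreached agents to gain at least one new agent per block, and conclude after $n-1$ blocks by a cardinality count. The only cosmetic difference is that the paper runs the argument on $\llbracket 0, t_1\rrbracket$ and invokes repetition for general $k$, while you work directly on $\llbracket t_k, t_{k+1}\rrbracket$.
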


\begin{proof}
    Let $i,j\in \cV$ and set $\cV_0=\{i\}$, and define
    \[
    \cV_s =\{p\in \cV: \text{$i$ meets $p$ on $\llbracket0,s\rrbracket$}\} \quad \text{for $s\in \bN$}. 
    \]
    We claim that for all $l\in\bN_0$, if $\cV\setminus\cV_{\tau_l}\neq\varnothing$, then $\cV_{\tau_l}\subsetneq\cV_{\tau_{l+1}}$. 
    
    Since $\{\cG(\tau_l),\cG(\tau_l+1),\ldots,\cG(\tau_{l+1}-1)\}$ is jointly connected, there are some $p\in \cV_{\tau_l}$, $q\in \cV\setminus \cV_{\tau_l}$ and $m\in \llbracket\tau_l,\tau_{l+1}-1\rrbracket$ such that $(p,q)\in \cE(m)$, which implies $p\in \cN_q(m)$. 

    We construct a sequence $\{i_{\tau_l},i_{\tau_{l}+1},\ldots,i_{\tau_{l+1}}\}$ as follows: 
    \begin{equation*}
    i_{\tau_l+r}=\begin{cases}
    p\quad \text{for $r=0,1,\ldots,m-\tau_l$},\\
    q\quad \text{for $r=m-\tau_l+1,\ldots,\tau_{l+1}-\tau_l$}.
    \end{cases}
    \end{equation*}
    By employing this sequence, we see that $p$ meets $q$ on $\llbracket\tau_l,\tau_{l+1}\rrbracket$.
    Since $p\in \cV_{\tau_l}$, it follows that $i$ meets $p$ on $\llbracket0,\tau_{l}\rrbracket$.
    Hence, we obtain that $i$ meets $q$ on $\llbracket0,\tau_{l+1}\rrbracket$, which implies that $q\in \cV_{\tau_{l+1}}$.
    Since we choose $q$ in $\cV\setminus \cV_{\tau_l}$, it follows that $\cV_{\tau_l}\subsetneq \cV_{\tau_{l+1}}$.
    This proves the claim.

   By the claim, the finite sequence of sets \( \{\mathcal{V}_{\tau_l}\}_{l=0}^{n} \) is strictly increasing until it reaches \( \mathcal{V} \). Since the cardinality of \( \mathcal{V} \) is \( n \), we have
\[
\mathcal{V} = \mathcal{V}_{\tau_{n-1}}.
\]
    Therefore, we conclude that \( i \) meets \( j \) on \( \llbracket0, \tau_{n-1}\rrbracket = \llbracket0, t_1\rrbracket \).  
By repeating the same argument, one can show that \( i \) meets \( j \) on \( \llbracket t_k, t_{k+1}\rrbracket \) for all \( k \in \mathbb{N}_0 \).
\end{proof}

Define 
\[
V_2^{\max}(t)=\max_{i \in \cV} V_2(\mu_i(t)),
\]
where $V_2(\mu)=\int_{\bR^d}|x|^2\,d\mu$.

\begin{lemma}
\label{lem_V2converge}
There exists a constant  $V_2^{\max}(\infty)$ such that
 $\lim_{t\to \infty}V_2(\mu_i(t))=V_2^{\max}(\infty)$ for all $i\in \cV$.
\end{lemma}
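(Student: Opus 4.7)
My plan is to combine the monotone decay of $V_2^{\max}(t)$ with a forward-in-time joining-sequence propagation that forces $V_2^{\max}(t)-V_2^{\min}(t)$ to zero, and then conclude by squeeze.

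First I would note that the $2$-displacement convexity of $V_2$ (Remark \ref{rmk0114_1}) applied to the update \eqref{update} gives
\[
V_2(\mu_i(t+1)) \le \sum_{j\in\cN_i(t)} w_{ij}(t)\,V_2(\mu_j(t)) \le V_2^{\max}(t),
\]
after discarding the nonnegative $W_2^2$ term. Hence $V_2^{\max}$ is non-increasing and non-negative, so it converges to some $L\ge 0$, which I will call $V_2^{\max}(\infty)$; this immediately gives $\limsup_{t\to\infty} V_2(\mu_i(t))\le L$ for every $i\in\cV$.

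The heart of the argument is to show $V_2^{\max}(t)-V_2^{\min}(t)\to 0$ as $t\to\infty$. Given $t$, let $k$ satisfy $t_k\le t<t_{k+1}$. By Lemma \ref{lem_meet} every pair of agents meets on $\llbracket t_{k+1},t_{k+2}\rrbracket$, and by Remark \ref{rmk0114_2} this extends to $\llbracket t,t_{k+2}\rrbracket$. For an arbitrary target $j\in\cV$, pick $i^*$ with $V_2(\mu_{i^*}(t))=V_2^{\min}(t)$ and a joining sequence $\{l_s\}_{s=t}^{t_{k+2}}$ from $i^*$ to $j$. Along the sequence one has $w_{l_{s+1},l_s}(s)\ge\delta$ and $V_2^{\max}(s)\le V_2^{\max}(t)$, so a short computation gives
\[
V_2(\mu_{l_{s+1}}(s+1)) \le \delta\,V_2(\mu_{l_s}(s)) + (1-\delta)\,V_2^{\max}(t).
\]
Setting $a_s:=V_2^{\max}(t)-V_2(\mu_{l_s}(s))\ge 0$, this rearranges to $a_{s+1}\ge\delta a_s$; iterating the at most $2(n-1)L$ steps and maximising over $j$ yields
\[
V_2^{\max}(t)-V_2^{\min}(t) \le \delta^{-2(n-1)L}\bigl(V_2^{\max}(t)-V_2^{\max}(t_{k+2})\bigr).
\]
As $t\to\infty$ both $V_2^{\max}(t)$ and $V_2^{\max}(t_{k+2})$ approach $L$, so the right-hand side vanishes. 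Since $V_2^{\min}(t)\le V_2(\mu_i(t))\le V_2^{\max}(t)$ and both envelopes tend to $L$, the squeeze theorem gives $\lim_{t\to\infty}V_2(\mu_i(t))=L$ for every $i\in\cV$.

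The main obstacle I anticipate is that $V_2^{\min}(t)$ is not monotone in general: the barycenter of distant measures can have strictly smaller second moment than any of its inputs (two antipodal Diracs collapse to $\delta_0$), so pointwise convergence of each $V_2(\mu_i(t))$ cannot be read off from the decay of $V_2^{\max}$ alone. The key trick is to run the joining-sequence propagation \emph{forward} from $t$ to $t_{k+2}$ rather than from some earlier time to $t$: this converts the already-known decay of $V_2^{\max}$ into instantaneous control of the present spread $V_2^{\max}(t)-V_2^{\min}(t)$.
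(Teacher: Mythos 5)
Your proposal is correct and follows essentially the same route as the paper: both establish monotone decay of $V_2^{\max}$, then propagate the per-step contraction $V_2(\mu_{l_{s+1}}(s+1))\le (1-\delta)V_2^{\max}(s)+\delta V_2(\mu_{l_s}(s))$ forward along a joining sequence to a later synchronization time, choose the terminal agent to realize the maximum there, and conclude by squeezing. Your reformulation via $a_{s+1}\ge\delta a_s$ and the gap $V_2^{\max}(t)-V_2^{\min}(t)$ is just a repackaging of the paper's inequality $V_2^{\max}(t_{k+1})\le(1-\delta^{M})V_2^{\max}(t)+\delta^{M}V_2(\mu_i(t))$.
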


\begin{proof}
We begin by noticing that
\[
0\le V_2^{\max}(t+1)\le V_2^{\max}(t).
\]
This inequality follows from the convexity \eqref{eq_convex} and Remark \ref{rmk0114_1}. Hence \( V_2^{\max}(t) \) is nonincreasing and bounded below, and thus, there exists a constant $V_2^{\max}(\infty)$ such that 
\begin{equation*}
V_2^{\max}(t)\searrow V_2^{\max}(\infty) \quad \text{as $t\to \infty$}.
\end{equation*}
Next, we show that $V_2(\mu_i(t))$ converges to $V_2^{\max}(\infty)$ for all $i\in \cV$.

Set $t_k=\tau_{k(n-1)}$ for $k\in \bN_0$.
For any $t\in \bN$, there exists $k\in \bN$ such that 
\[
t_{k-1}<t\le t_k<t_{k+1}.
\]
\
Fix $i\in \cV$, and choose arbitrary $j\in \cV$ (to be specified later).
By Lemma \ref{lem_meet}, $i$ meets $j$ on $\llbracket t_k,t_{k+1}\rrbracket$, and by Remark \ref{rmk0114_2}, we see that $i$ meets $j$ on $\llbracket t,t_{k+1}\rrbracket$.
Let $\{l_s\}_{s=t}^{t_{k+1}}$ be a joining sequence for $i$ and $j$ on $\llbracket t,t_{k+1}\rrbracket$.
Then for each $s\in \llbracket t,t_{k+1}-1\rrbracket$, using \eqref{eq_convex} and the lower bound assumption \eqref{eq_lower}, we have 
\begin{align*}   
V_2(\mu_{l_{s+1}}(s+1))
&\le
\sum_{r\in \cN_{l_{s+1}}(s)}w_{l_{s+1} r}(s)V_2(\mu_{r}(s))
\\&\le
(1-w_{l_{s+1}l_s}(s))V_2^{\max}(s)
+w_{l_{s+1}l_s}(s) V_2(\mu_{l_s}(s))
\\&
\le (1-\delta)V_2^{\max}(s)
+\delta V_2(\mu_{l_s}(s)).
\end{align*}
Here, we use the fact that $l_s\in \cN_{l_{s+1}}(s)$.

By iterating the above procedure and using the fact that \( V_2^{\max}(t) \) is decreasing, we obtain
\begin{align*}  
V_2(\mu_j(t_{k+1}))
&= V_2(\mu_{l_{t_{k+1}}}(t_{k+1}))
\\&\le 
(1 - \delta) V_2^{\max}(t_{k+1} - 1) + \delta V_2(\mu_{l_{t_{k+1} - 1}}(t_{k+1} - 1))
\\&
\le 
(1 - \delta^2) V_2^{\max}(t_{k+1} - 2) + \delta^2 V_2(\mu_{l_{t_{k+1} - 2}}(t_{k+1} - 2))
\\&
\le \cdots \le (1 - \delta^{t_{k+1} - t}) V_2^{\max}(t) + \delta^{t_{k+1} - t} V_2(\mu_{l_t}(t))
\\&
= (1 - \delta^{t_{k+1} - t}) V_2^{\max}(t) + \delta^{t_{k+1} - t} V_2(\mu_i(t))
\\&\le (1 - \delta^{t_{k+1} - t_{k-1}}) V_2^{\max}(t) + \delta^{t_{k+1} - t_{k-1}} V_2(\mu_i(t))
\\&
\le (1 - \delta^{M}) V_2^{\max}(t) + \delta^{M} V_2(\mu_i(t)),
\end{align*}
where \( M := \sup_{k \in \mathbb{N}} (t_{k+1} - t_{k-1}) \).  
Note that \( M \le 2L(n-1)\), where $L=\sup_i(\tau_i-\tau_{i-1})$ is the constant introduced in Section \ref{sec:pre}. 

We now choose $j \in\cV$ such that 
\[
V_2(\mu_j(t_{k+1}))=V_2^{\max}(t_{k+1}).
\]
Then the previous estimate yields
\[V_2^{\max}(t_{k+1})\le (1-\delta^{M})V_2^{\max}(t)+\delta^{M}V_2(\mu_{i}(t)).\]
Since $V_2^{\max}(\infty)\le V_2^{\max}(t_{k+1})$, it follows that
\begin{equation*}
V_2^{\max}(\infty)\le (1-\delta^{M})V_2^{\max}(t)+\delta^{M}V_2(\mu_{i}(t)).    
\end{equation*}
Moreover, as \( V_2(\mu_i(t)) \le V_2^{\max}(t) \), we conclude that
\[
V_2^{\max}(\infty) \le (1 - \delta^{M}) V_2^{\max}(t) + \delta^{M} V_2(\mu_i(t)) \le V_2^{\max}(t).
\]
Applying the squeeze theorem completes the proof.
\end{proof}

\begin{lemma}
    \label{lem_k-convex}
    Let $V:\bR^d \to \bR$ be a convex function. Assume that there exists a positive constant $C$ such that $-C\le V(x)\le C(1+|x|^2)$ for all $x\in \bR^d$. Set $V(\mu)=\int_{\bR^d} V(x) \,d\mu$. Then, there exists a constant $V^{\max}(\infty)$ such that
 $\lim_{t\to \infty}V(\mu_i(t))=V^{\max}(\infty)$ for all $i\in \cV$.
\end{lemma}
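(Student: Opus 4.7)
The plan is to mirror the proof of Lemma~\ref{lem_V2converge} with the functional $V_2$ replaced by $V(\mu)=\int_{\bR^d}V\,d\mu$. First I would verify that $V(\mu_i(t))$ is finite and uniformly bounded in $t$: the upper bound $V(x)\le C(1+|x|^2)$ gives $V(\mu_i(t))\le C(1+V_2(\mu_i(t)))$, and the proof of Lemma~\ref{lem_V2converge} supplies $V_2(\mu_i(t))\le V_2^{\max}(0)$ for every $t$. Next, setting $V^{\max}(t):=\max_{i\in\cV}V(\mu_i(t))$, I would invoke inequality~\eqref{eq_convex} with $k=0$ (applicable because $V$ is convex, hence $0$-convex) at the barycentric update~\eqref{update} to obtain
\[
V(\mu_i(t+1))\le\sum_{j\in\cN_i(t)}w_{ij}(t)\,V(\mu_j(t))\le V^{\max}(t),
\]
so that $V^{\max}(t+1)\le V^{\max}(t)$. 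Combined with the lower bound $V\ge -C$, which forces $V^{\max}(t)\ge -C$, this shows that $V^{\max}(t)\searrow V^{\max}(\infty)$ for some real constant.

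The second step is to propagate the limit to every agent by adapting the joining-sequence argument. For fixed $i\in\cV$ and $t\in\bN$, pick $k$ with $t_{k-1}<t\le t_k$, where $t_k=\tau_{k(n-1)}$. By Lemma~\ref{lem_meet} and Remark~\ref{rmk0114_2}, for any $j\in\cV$ there is a joining sequence $\{l_s\}_{s=t}^{t_{k+1}}$ with $l_t=i$ and $l_{t_{k+1}}=j$. Combining inequality~\eqref{eq_convex} with the weight lower bound $w_{l_{s+1}l_s}(s)\ge\delta$ from~\eqref{eq_lower} gives, at each step,
\[
V(\mu_{l_{s+1}}(s+1))\le(1-\delta)V^{\max}(s)+\delta\,V(\mu_{l_s}(s)),
\]
and iterating exactly as in Lemma~\ref{lem_V2converge}, together with monotonicity of $V^{\max}$, produces
\[
V(\mu_j(t_{k+1}))\le(1-\delta^{M})V^{\max}(t)+\delta^{M}V(\mu_i(t)),
\]
with $M\le 2L(n-1)$. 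Choosing $j$ so that $V(\mu_j(t_{k+1}))=V^{\max}(t_{k+1})$ upgrades the left-hand side to $V^{\max}(t_{k+1})$.

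Finally, the sandwich
\[
V^{\max}(\infty)\le V^{\max}(t_{k+1})\le(1-\delta^{M})V^{\max}(t)+\delta^{M}V(\mu_i(t))\le V^{\max}(t),
\]
together with $V^{\max}(t)\to V^{\max}(\infty)$, forces $V(\mu_i(t))\to V^{\max}(\infty)$ by the squeeze theorem. I do not anticipate any substantial new obstacle beyond Lemma~\ref{lem_V2converge}; the hypotheses on $V$ are crafted precisely to support this template—the quadratic upper bound secures finiteness of $V(\mu_i(t))$, while the uniform lower bound $V\ge -C$ upgrades the monotone nonincreasing sequence $V^{\max}(t)$ into a convergent one. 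The key structural input, namely $0$-displacement convexity of the functional $V$, is obtained for free from classical convexity of $V$ via inequality~\eqref{eq_convex}.
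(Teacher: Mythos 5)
Your proposal is correct and follows exactly the route the paper intends: the paper's own proof of this lemma is omitted with the remark that it proceeds analogously to Lemma~\ref{lem_V2converge}, and your write-up is precisely that adaptation (using $0$-displacement convexity from \eqref{eq_convex}, the lower bound $V\ge -C$ in place of $V_2\ge 0$, and the quadratic growth bound to keep $V(\mu_i(t))$ finite). No gaps.
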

\begin{proof}
The proof proceeds analogously to that of Lemma \ref{lem_V2converge}, so, we omit it.  
\end{proof}

We now present the proof of Theorem \ref{thm_convergence}.

\begin{proof}[Proof of Theorem \ref{thm_convergence}]
We consider the first statement of the theorem. Namely, we show:
\begin{equation}
    \label{lem_convergence W_2}
    \text{For all $i,j\in \cV$, we have $\lim_{t\to \infty}W_2(\mu_i(t),\mu_j(t))=0$.}
\end{equation}

Fix $i\in \cV$. For $t\in \bN_0$ and $l\in \cN_i(t)$, by utilizing \eqref{eq_dis_convex} and Remark \ref{rmk0114_1}, we have
\begin{equation*}
    \begin{split}
    V_2(\mu_i(t+1))
    &\le
    \sum_{j\in \cN_i(t)}w_{ij}(t) V_2(\mu_j(t))-\sum_{j \in \cN_i(t)} w_{ij}(t)W_2^2(\mu_i(t+1),\mu_j(t)).
    \\
    &\le \sum_{j\in \cN_i(t)}w_{ij}(t) V_2(\mu_j(t))- w_{il}(t)W_2^2(\mu_i(t+1),\mu_l(t)).     
\end{split}
\end{equation*}
From this estimate and $w_{il}(t)\ge \delta$, it follows that for all $l\in \cN_i(t)$,
\begin{equation}
    \begin{split}
        \label{eq1209_1}
W_2^2(\mu_i(t+1),\mu_l(t))
&\le \delta^{-1}\sum_{j\in \cN_i(t)}w_{ij}(t) V_2(\mu_j(t))-\delta^{-1}V_2(\mu_i(t+1))
\\&= \delta^{-1}\sum_{j\in \cN_i(t)}w_{ij}(t) \left(V_2(\mu_j(t))-V_2(\mu_i(t+1))\right)
\\&\le \delta^{-1}\sum_{i,j\in \cV}|V_2(\mu_i(t))-V_2(\mu_j(t))|.
    \end{split}
\end{equation}
\[
\]

Let $\varepsilon>0$. By Lemma \ref{lem_V2converge}, $V_2(\mu_i(t))$ converges to $V_2^{\max}(\infty)$ for all $i\in \cV$. Hence there exists a positive integer $T$ such that  
\begin{equation}
\label{eq1209_2}
\sum_{i,j\in \cV}|V_2(\mu_i(t))-V_2(\mu_j(t))|<\varepsilon \quad \text{for $t=T,T+1,T+2,\ldots$.}
  \end{equation}
  
Let $i,j\in \cV$. Now, we show that $W_2(\mu_i(t), \mu_j(t)) \to 0$ as $t \to \infty$. 
Let $k_0$ be the smallest positive integer such that $T\le t_{k_0-1}$, where $t_u=\tau_{u(n-1)}$. 
Let $t\ge t_{k_0-1}$. Then for some $k \in \bN$,
\[
T\le t_{k_0-1}\le t_{k-1}<t\le t_{k}<t_{k+1}.
\]
By Lemma  \ref{lem_meet}, $i$ meets $j$ on $\llbracket t_k,t_{k+1}\rrbracket$, hence also on $\llbracket t,t_{k+1}\rrbracket$. 

Let $\{l_s\}_{s=t}^{t_{k+1}}$ be a joining sequence from $i$ to $j$ on $\llbracket t,t_{k+1}\rrbracket$.
Then, for all $s\in \llbracket t,t_{k+1}-1\rrbracket$, we have
\begin{equation*}
    W_2^2(\mu_{l_{s+1}}(s+1),\mu_{l_s}(s))\le \delta^{-1}\varepsilon
\end{equation*}
from the estimate \eqref{eq1209_1} with $l_s\in \cN_{l_{s+1}}(s)$ and \eqref{eq1209_2}.
By summing up, we arrive at 
\begin{align*}    
W_2(\mu_i(t),\mu_{j}(t_{k+1}))
&=W_2(\mu_{l_t}(t),\mu_{l_{t_{k+1}}}(t_{k+1}))
\\&\le \sum_{s=t}^{t_{k+1}-1}W_2(\mu_{l_{s+1}}(s+1),\mu_{l_s}(s))
\\&\le M\delta^{-1/2}\sqrt{\varepsilon},
\end{align*}
where $M = \sup_{k\in \bN} |t_{k+1}-t_{k-1}|$.
Similarly, we have 
\[
W_2(\mu_j(t),\mu_{j}(t_{k+1}))\le M\delta^{-1/2}\sqrt{\varepsilon}.
\]

Hence, for all $t\ge t_{k_0-1}$,
\[
W_2(\mu_i(t),\mu_{j}(t))\le  W_2(\mu_i(t),\mu_{j}(t_{k+1}))+W_2(\mu_j(t),\mu_{j}(t_{k+1}))\le 2M\delta^{-1/2}\sqrt{\varepsilon}.
\]
Since $\varepsilon > 0$ is arbitrary, \eqref{lem_convergence W_2} follows.

Next, we show the remaining part of Theorem \ref{thm_convergence}.
Let $i\in \cV$ and $p\in [1,2)$. By Lemma \ref{lem1211_2}, there exists a subsequence $\{\mu_i(t_k)\}$ of $\{\mu_i(t)\}$ such that 
\[
\lim_{k\to \infty} W_p(\mu_i(t_k),\mu_i^*)=0.
\]
for some $\mu_i^*\in \cP_2(\bR^d)$.
Assume that there exists another subsequence $\{\mu_i(s_k)\}$ such that $
\lim_{k\to \infty} W_p(\mu_i(s_k),\nu_i^*)=0$ for some $\nu_i^*\in \cP_2(\bR^d)$.

We claim that $\mu_i^*=\nu_i^*$. 
Let \( \varphi \in C_c^\infty(\mathbb{R}^d) \) with \( |D^2 \varphi| \le K \), for some $K>0$. Here, \( C_c^\infty(\mathbb{R}^d) \) denotes the space of infinitely differentiable functions with compact support.
Since $K|x|^2$ is $2K$-convex and $K|x|^2+\varphi(x)$ is $K$-convex on $\bR^d$, using Lemma \ref{lem_k-convex}, we see that
\[
\lim_{k\to \infty}\int_{\bR^d}K|x|^2\, d\mu_i(t_k)=\lim_{k\to \infty}\int_{\bR^d}K|x|^2\, d\mu_i(s_k),
\]
and
\[
\lim_{k\to \infty}\int_{\bR^d}\left(K|x|^2+ \varphi(x)\right)\, d\mu_i(t_k)=\lim_{k\to \infty}\int_{\bR^d}\left(K|x|^2+ \varphi(x)\right)\, d\mu_i(s_k).
\]
From this, we obtain that
\begin{align*}    
\int_{\bR^d} \varphi(x) \,d\mu_i^*
&=\lim_{k\to \infty}\int_{\bR^d} \varphi(x)\,d\mu_i(t_k)
\\&=\lim_{k\to \infty}\int_{\bR^d}\left(K|x|^2+ \varphi(x)\right)\, d\mu_i(t_k)-\lim_{k\to\infty}\int_{\bR^d} K|x|^2 \,d\mu_i(t_k)
\\&
=\lim_{k\to \infty}\int_{\bR^d}\left(K|x|^2+ \varphi(x)\right) \,d\mu_i(s_k)-\lim_{k\to\infty}\int_{\bR^d} K|x|^2 \,d\mu_i(s_k)
\\&
=\lim_{k\to \infty}\int_{\bR^d} \varphi(x)\,d\mu_i(s_k)
\\&
=\int_{\bR^d} \varphi(x)\,d\nu_i^*.
\end{align*}
Since \( \varphi \) is an arbitrary test function in \( C_c^\infty(\mathbb{R}^d) \), it follows that \( \mu_i^* = \nu_i^* \).
This implies that
\begin{equation}
    \label{eq1211_4}
    \lim_{t\to \infty} W_p(\mu_i(t),\mu_i^*) = 0.
\end{equation}
Suppose, for contradiction, that \eqref{eq1211_4} does not hold. 
Then there exists a sequence $\{s_m\}$ such that
\begin{equation}
    \label{eq_contrad}
    W_p(\mu_i(s_m),\mu_i^*) \ge \gamma,
\end{equation}
for some constant $\gamma>0$.
Since
\[
\sup_{m\in \mathbb{N}_0} \int_{\mathbb{R}^d} |x|^p \, d\mu_i(s_m) < \infty,
\]
we can extract a subsequence $\{s_{m_k}\}$ (denoted simply by $s_k$) that converges in $W_p$. By the preceding claim, the limit of $\mu_i(s_k)$ must be $\mu_i^*$. This contradicts \eqref{eq_contrad}.

Finally, we show that $\mu_i^*=\mu_j^*$ for all $i,j\in \cV$.
Recall that for any $1\le p_1\le p_2<\infty$, 
\[
W_{p_1}(\mu,\nu)\le W_{p_2}(\mu,\nu).
\]
Since $1 \le p < 2$, this gives
\begin{align*}   
W_p(\mu_i^*,\mu_j^*)
&\le W_p(\mu_i^*,\mu_i(t))+W_p(\mu_i(t),\mu_j(t))
+W_p(\mu_j(t),\mu_j^*) 
\\&
\le W_p(\mu_i^*,\mu_i(t))+W_2(\mu_i(t),\mu_j(t))
+W_p(\mu_j(t),\mu_j^*).
\end{align*}
By letting $t\to \infty$ to above inequalities, from  \eqref{lem_convergence W_2} and \eqref{eq1211_4}, we arrive at 
\[
W_p(\mu_i^*,\mu_j^*)
\le \lim_{t\to \infty}\left( W_p(\mu_i^*,\mu_i(t))+W_2(\mu_i(t),\mu_j(t))
+W_p(\mu_j(t),\mu_j^*)\right)=0.
\]
From this and \eqref{eq1211_4}, we obtain \eqref{main converge}. This completes the proof.
\end{proof}

The proof of Theorem \ref{thm_convergence2} is analogous.
\begin{proof}[Proof of Theorem \ref{thm_convergence2}]
    Since the underlying idea of the proof is essentially the same as that of the proof of Theorem \ref{thm_convergence}, we present only a concise argument. 
    
    Since 
    \[
    \sup_{i\in\cV}\int_{\bR^d}|x|^q\,d\mu_i(0)<\infty
    \]
    and the functional $V(\mu):=\int|x|^q\,d\mu$ satisfies \eqref{eq_convex}, it follows that
    \[
    \sup_{t\in \bN_0}\int_{\bR^d}|x|^q\,d\mu_i(t)<\infty, \quad t=0,1,\ldots.    
    \]
    By Lemma \ref{lem1211_2}, for each $i$, we have a sequence $\{t_k\}$ and $\mu_i^*\in \cP_2(\bR^d)$ such that 
    \[
    \lim_{k\to \infty}W_2(\mu_i(t_k),\mu_i^*)=0.
    \]

    By following the proof of Theorem \ref{thm_convergence} line by line, we obtain
    \[
    \lim_{t\to \infty}W_2(\mu_i(t),\mu_i^*)=0,
    \]
    for all $i\in \cV$.
    This and  \eqref{lem_convergence W_2} imply 
    \[
    W_2(\mu_i^*,\mu_j^*)=\lim_{t\to\infty }W_2(\mu_i(t),\mu_j(t))=0.
    \]
    Hence, we have \eqref{main converge2}.
    This completes the proof.
\end{proof}

\section{Conclusion}\label{sec:conclusion}
In this paper, we study the convergence of consensus dynamics governed by an update rule based on the Wasserstein barycenter. 
In dimensions greater than one, the lack of convexity of the Wasserstein distance prevents the use of arguments employed in previous works \cite{bishop2014distributed,bishop2021network,MR3373939}. 
To address this issue, we introduce a suitable convex functional and apply a Wasserstein version of Jensen's inequality. 
Under assumptions on network connectivity and interaction weights, we establish the convergence to consensus.

\section*{Acknowledgment}
This work was supported by the National Research Foundation of Korea (NRF) (No. RS-2022-NR069609).

\end{document}